\newtheorem{thm}{Theorem}[section]
\newtheorem{conj}[thm]{Conjecture}
\renewcommand{\Re}{{\rm Re}}
\newcommand{\sep}{\preceq}
\newcommand{\s}{\mathbf{s}}
\DeclareMathOperator{\des}{des}
\newcommand{\Bn}{{\mathfrak B}_n} 
\newcommand{\Dn}{{\mathfrak D}_n} 
\newcommand{\floor}[1]{\ensuremath{\left\lfloor #1 \right\rfloor}} 
\begin{document}

\begin{center}
{\large \bf  The Real-rootedness of Eulerian Polynomials via the Hermite--Biehler Theorem
}
\end{center}

\begin{center}
Arthur L.B. Yang$^{1}$ and Philip B. Zhang$^{2}$\\[6pt]

$^{1, 2}$Center for Combinatorics, LPMC-TJKLC\\
Nankai University, Tianjin 300071, P. R. China\\[6pt]

Email: $^{1}${\tt yang@nankai.edu.cn},
             $^{2}${\tt zhangbiaonk@163.com}
\end{center}

\noindent\textbf{Abstract.}
Based on the Hermite--Biehler theorem, we simultaneously prove the real-rootedness of Eulerian polynomials of type $D$ and the real-rootedness of affine Eulerian polynomials of type $B$, which were first obtained by Savage and Visontai by using the theory of $\s$-Eulerian polynomials.
We also confirm Hyatt's conjectures on the interlacing property of half Eulerian polynomials.
Borcea and Br{\"a}nd{\'e}n's work on the characterization of linear operators preserving Hurwitz stability  is critical to this approach.

\noindent \emph{AMS Classification 2010:} Primary 05A15, 26C10; Secondary 20F55, 05E45, 93D05.

\noindent \emph{Keywords:}  Eulerian polynomials, Hermite--Biehler Theorem, Borcea and Br{\"a}nd{\'e}n's stability criterion, weak Hurwitz stability.

\section{Introduction}

Brenti \cite{Brenti1994q} introduced the notion of Eulerian polynomials for  finite Coxeter groups.
Let $W$ be a finite Coxeter group with generators $s_{1},s_{2},\ldots,s_{n}$.
The length of each $\sigma\in W$ is defined as the number of generators in one of its reduced expressions, denoted $\ell(\sigma)$. We say that $i$ is a descent of $\sigma$ if $\ell(\sigma s_{i})<\ell(\sigma)$.
Let $\des \sigma$ denote the number of descents. The Eulerian polynomial of $W$ is defined by
\begin{align*}
W(x)\ =\ \sum_{\sigma\in W}x^{\des\sigma}.
\end{align*}
If $W$ is of type $A_n$ (resp. $B_n$ or $D_n$), then we simply write
$A_n(x)$ (resp. $B_n(x)$ or $D_n(x)$) for $W(x)$. It is well known that
$A_n(x)$ is the classical Eulerian polynomial.
Brenti \cite{Brenti1994q} conjectured that, for any finite irreducible Coxeter group $W$,  the polynomial $W(x)$ has only real zeros, and left the case of $D_n(x)$ open.

Dilks, Petersen, and Stembridge \cite{Dilks2009Affine} studied the affine descent statistic, which is defined by Cellini \cite{Cellini1995general}, and proposed a companion conjecture.
Suppose that $W$ is an irreducible finite Weyl group with generators $s_1,s_2,\ldots,s_n$. Let $s_0$ be the reflection corresponding to the highest root.
For each $\sigma\in W$, we say that $i$ is an affine descent of $\sigma$ if either $i=0$ and $\ell(\sigma s_{0})>\ell(\sigma)$ or $i$ is a descent for $1\le i\le n$.
Denote $\widetilde{\des}\,\sigma$ the number of affine descents of $\sigma$.
Analogous to the definition of $W(x)$, the affine Eulerian polynomial of $W$ is defined as
\begin{align*}
\widetilde{W}(x)\ =\ \sum_{\sigma\in W}x^{\widetilde{\des}\,\sigma}.
\end{align*}
Similarly, we use
$\widetilde{A}_n(x)$ (resp. $\widetilde{B}_n(x)$ or $\widetilde{D}_n(x)$) to represent $\widetilde{W}(x)$ when $W$ is of type $A_n$ (resp. $B_n$ or $D_n$). Dilks, Petersen, and Stembridge \cite{Dilks2009Affine} conjectured that, for any irreducible Weyl group, the affine Eulerian polynomial has only real zeros and left the cases of  $\widetilde{B}_{n}(x)$ and $\widetilde{D}_{n}(x)$ open.

By using the theory of $\s$-Eulerian polynomials, Savage and Visontai \cite{Savage2015s} proved
the real-rootedness of ${D}_n(x)$ and $\widetilde{B}_n(x)$, and hence completely confirmed Brenti's conjecture on  Eulerian polynomials for finite Coxeter groups.
In particular, Savage and Visontai \cite{Savage2015s} obtained the following result.

\begin{thm} \label{thm:main}
 Both $D_{n}(x)$ and $\widetilde{B}_{n}(x)$ have only real zeros. Moreover, we have
 $$D_{n}(x) \preceq \widetilde{B}_{n}(x).$$
\end{thm}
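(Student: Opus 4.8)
The strategy is to package the two target polynomials into a single real polynomial and let the Hermite--Biehler theorem convert Hurwitz stability into the desired real-rootedness and interlacing. Since $D_n(x)$ and $\widetilde{B}_n(x)$ have nonnegative coefficients, their real zeros (if any) are nonpositive, so substituting $x\mapsto x^2$ moves those zeros onto the imaginary axis. I would therefore introduce the real polynomial
$$h_n(x)\ =\ \widetilde{B}_n(x^2) + x\, D_n(x^2),$$
whose even part is $\widetilde{B}_n(x^2)$ and whose odd part is $x\,D_n(x^2)$. By the Hermite--Biehler theorem, $h_n$ is weakly Hurwitz stable (all zeros satisfy $\Re z \le 0$) if and only if $D_n$ and $\widetilde{B}_n$ are both real-rooted and their zeros interlace with $\widetilde{B}_n$ dominating, that is, $D_n \preceq \widetilde{B}_n$. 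This single equivalence reduces the entire statement --- real-rootedness of both polynomials together with the interlacing relation --- to the one claim that $h_n$ is weakly Hurwitz stable, which I would prove by induction on $n$.

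For the inductive step I would first record recurrences for $D_n$ and $\widetilde{B}_n$. The natural recurrences do not close up on the pair $(D_n,\widetilde{B}_n)$ alone but involve auxiliary families --- most conveniently Hyatt's half Eulerian polynomials, whose interlacing is precisely the content of the conjectures this paper also settles. I would lift these recurrences to the level of the polynomials $h_n$ (and the companion polynomials attached to the half Eulerian family), obtaining an expression of the form $h_n = \mathcal{T}_n(g_{n-1})$ for an explicit linear operator $\mathcal{T}_n$ and a weakly Hurwitz stable polynomial $g_{n-1}$ of level $n-1$. The crux is then to certify, using Borcea and Br\"{a}nd\'{e}n's characterization of linear operators that preserve Hurwitz stability (equivalently, that preserve zeros in a fixed half-plane), that $\mathcal{T}_n$ carries weakly Hurwitz stable polynomials to weakly Hurwitz stable polynomials. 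Together with a direct verification of the base case, this propagates weak stability from $n-1$ to $n$.

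The main obstacle is this middle step: finding a recurrence that simultaneously generates $D_n$ and $\widetilde{B}_n$ correctly \emph{and} is realized by a Borcea--Br\"{a}nd\'{e}n stability preserver. Because the recurrences couple several Eulerian-type polynomials, I expect to have to track a short vector (or a suitable bivariate lift) of mutually interlacing polynomials and prove a joint stability statement; the relation $D_n \preceq \widetilde{B}_n$ would then follow because $D_n$ and $\widetilde{B}_n$ are nonnegative combinations of members of a compatible interlacing family, and such combinations remain real-rooted and interlacing. The remaining bookkeeping --- leading coefficients, the precise distinction between $\preceq$ and $\seps$, and the behavior on the imaginary axis that makes the even and odd building blocks only \emph{weakly} stable --- is routine once the stability-preserving operator is in hand.
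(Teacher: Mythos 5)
Your first step is exactly the paper's: package the pair into the single real polynomial $h_n(x)=\widetilde{B}_n(x^2)+x\,D_n(x^2)$ (the paper works with $D_n(x^2)+\tfrac{1}{2x}\widetilde{B}_n(x^2)$, which differs only by a factor $2x$ and a harmless rescaling of the odd part), and observe that by the Hermite--Biehler theorem its weak Hurwitz stability is equivalent to the real-rootedness of both polynomials together with $D_n(x)\preceq\widetilde{B}_n(x)$. That reduction is correct, including the direction of the interlacing.

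The gap is everything after that. You propose to prove stability of $h_n$ by induction on $n$, via a recurrence $h_n=\mathcal{T}_n(g_{n-1})$ coupling $D_n$, $\widetilde{B}_n$ and the half Eulerian polynomials, with $\mathcal{T}_n$ certified by Borcea--Br\"and\'en --- but you never exhibit the recurrence, the operator, or its symbol, and you yourself flag this middle step as ``the main obstacle.'' As written this is a plan, not a proof; moreover, the natural recurrences here couple several families and would force you to track a vector of mutually interlacing polynomials, which is essentially the Savage--Visontai route rather than anything the Hermite--Biehler setup buys you. The paper avoids induction entirely: using Stembridge's identity $D_n(x)=B_n(x)-n2^{n-1}xA_{n-2}(x)$, the Dilks--Petersen--Stembridge identity $\widetilde{B}_n(x)=2x\bigl(2^nA_{n-1}(x)-nB_{n-1}(x)\bigr)$, and the classical relation $(x+1)^{n+1}A_{n-1}(x)=2^nxA_{n-1}(x^2)+B_n(x^2)$, it shows that $h_n$ equals (up to the factor $2x$) $(x+1)^n\bigl((x+1)A_{n-1}(x)-nxA_{n-2}(x)\bigr)$. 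Stability of the second factor then follows from a \emph{single} application of Borcea--Br\"and\'en: one writes $(x+1)A_{n-1}(x)+kxA_{n-2}(x)=T\bigl(xA_{n-2}(x)\bigr)$ with $T=(nx+n+k)-(x^2-1)\frac{d}{dx}$, computes the symbol $T[(xy+1)^n]=n(xy+1)^n\bigl(\frac{x+y}{xy+1}+\frac{k+n}{n}\bigr)$, and checks its stability for $k\ge -n$ by the M\"obius substitution $x=\frac{z-1}{z+1}$, $y=\frac{w-1}{w+1}$. The closed-form identity reducing $h_n$ to type~$A$ Eulerian polynomials is the idea your proposal is missing, and without it (or a concrete substitute for your operator $\mathcal{T}_n$) the argument does not go through.
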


Furthermore, Yang and Zhang \cite{YangMutual}
proved the real-rootedness of $\widetilde{D}_n(x)$, and hence completely confirmed Dilks, Petersen, and Stembridge's conjecture on affine Eulerian polynomials.

The key idea to prove Brenti's conjecture and Dilks, Petersen, and Stembridge's conjecture is to find some proper refinement of $\s$-Eulerian polynomials, and then to prove that the refined $\s$-Eulerian polynomials satisfy certain interlacing property. Given two real-rooted polynomials $f(z)$ and $g(z)$ with positive leading coefficients, let $\{r_i\}$ be the set of zeros
of $f(z)$ and $\{s_j\}$ the set of zeros of $g(z)$. We say that {$g(z)$ interlaces $f(z)$}, denoted $g(z)\sep f(z)$, if
\begin{align*}
\cdots\le s_2\le r_2\le s_1\le r_1.
\end{align*}

Recently, Hyatt \cite{HyattRecurrences} proposed another approach to Brenti's conjecture on the real-rootedness of Eulerian polynomials by considering the interlacing property of half Eulerian polynomials. Recall that the Coxeter group $\Bn$ of type $B$ of rank $n$ can be  regarded as the group of all bijections $\pi$ of the set $\pm[n]=\{\pm 1, \pm 2, \ldots, \pm n\}$
such that $\pi(-i)=-\pi(i)$ for all $i\in \pm[n]$.
We usually write $\pi$ in one-line notation $(\pi_{1},\pi_{2},\dots,\pi_{n})$, where $\pi_i=\pi(i)$.
The half Eulerian polynomials of type $B$ are given by
\begin{align*}
B_{n}^{+}(x)\ =\ \sum_{\sigma \in  \Bn:\sigma_n>0}\ x^{\des_{B} \sigma } \quad \mbox{and} \quad
B_{n}^{-}(x)\ =\ \sum_{\sigma \in  \Bn:\sigma_n>0}\ x^{\des_{B} \sigma }.
\end{align*}
The Coxeter group $\Dn$ of type $D$ of rank $n$  is composed of those even signed permutations of $\Bn$.
In the same manner, the half Eulerian polynomials of type $D$ are defined as
\begin{align*}
D_{n}^{+}(x)\ =\ \sum_{\sigma \in  \Dn:\sigma_n>0}\ x^{\des_{D} \sigma} \quad \mbox{and} \quad
D_{n}^{-}(x)\ =\ \sum_{\sigma \in  \Dn:\sigma_n>0}\ x^{\des_{D} \sigma}.
\end{align*}
Hyatt  proposed the following conjectures, which have been confirmed by himself in the new version of \cite{HyattRecurrences}.

\begin{conj}[{\cite[Corollaries 4.6 and 4.8]{HyattRecurrences}}]\label{conj:hyatt}
(i) For $n\ge1$ , $B_{n}^{+}(x)$ interlaces $x^{n}B_{n}^{+}(1/x)$ and
thus $B_{n}(x) = B_{n}^{+}(x) + x^{n}B_{n}^{+}(1/x)$ has only real zeros.

(ii) For $n\ge2$ , $D_{n}^{+}(x)$ interlaces $x^{n}D_{n}^{+}(1/x)$ and
thus $D_{n}(x) = D_{n}^{+}(x) + x^{n}D_{n}^{+}(1/x)$ has only real zeros.
\end{conj}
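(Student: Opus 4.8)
The plan is to reduce each interlacing assertion to a single weak Hurwitz stability statement, and then to prove that statement by induction, using a recurrence for the half Eulerian polynomials together with Borcea and Br\"and\'en's criterion for stability-preserving linear operators. Throughout, write $p^{*}(x)=x^{n}p(1/x)$ for the reverse of a degree-$n$ polynomial $p$, so that the two claims read $B_{n}^{+}(x)\preceq (B_{n}^{+})^{*}(x)$ and $D_{n}^{+}(x)\preceq (D_{n}^{+})^{*}(x)$. I would first dispose of the implications marked ``thus'': once $p\preceq p^{*}$ is known for $p=B_{n}^{+}$ (resp. $D_{n}^{+}$), both $p$ and $p^{*}$ are real-rooted with positive leading coefficients, and any nonnegative linear combination of two interlacing polynomials with positive leading coefficients is again real-rooted; in particular $p+p^{*}=B_{n}(x)$ (resp. $D_{n}(x)$) has only real zeros. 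Hence the entire content lies in the self-reverse interlacing.

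The first genuine step is the translation into stability. By the Hermite--Biehler theorem, for real polynomials $f,g$ with positive leading coefficients and $\deg f\in\{\deg g,\deg g+1\}$, the interlacing $g\preceq f$ is equivalent to all zeros of the complex combination $f(x)+\mathrm{i}\,g(x)$ lying in one closed half-plane bounded by the real axis; after the rotation $x\mapsto \mathrm{i}x$ of the complex plane this becomes precisely weak Hurwitz stability, i.e. all zeros in the closed left half-plane, of an associated real polynomial $\poly_{n}$ built from $B_{n}^{+}$ and its reverse. Thus I would define $\poly_{n}$ from the pair $(B_{n}^{+},(B_{n}^{+})^{*})$ in such a way that ``$\poly_{n}$ is weakly Hurwitz stable'' says exactly ``$B_{n}^{+}\preceq (B_{n}^{+})^{*}$''. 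The weak (closed half-plane) version is the correct one here, because the factors $p$ and $p^{*}$ may share zeros on the unit circle, which the correspondence sends to zeros of $\poly_{n}$ on the imaginary axis.

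Next I would set up the induction. Hyatt's recurrences for the half Eulerian polynomials express $B_{n+1}^{+}$, and simultaneously its reverse, as the image of $B_{n}^{+}$ under an explicit linear map, and I would lift these to a single linear operator $T$ acting on the associated polynomials, so that $\poly_{n+1}=T(\poly_{n})$. The heart of the argument is then to show that $T$ preserves weak Hurwitz stability, and this is exactly the kind of statement governed by Borcea and Br\"and\'en's characterization of linear preservers of stability with respect to a half-plane: one forms the symbol of $T$, namely its action on $(x+y)^{N}$, which is a polynomial in the two variables $x$ and $y$, and checks that this symbol is itself stable, or else that $T$ has range of dimension at most one. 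Verifying the symbol condition for the operator coming from Hyatt's recurrence is where I expect the main difficulty to lie; it is the one genuinely computational point, and it is also where the precise sign and degree conventions fixed in the previous step must match the boundary (imaginary-axis) behaviour, so that \emph{weak} stability is preserved rather than destroyed.

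Finally, with the base case checked directly ($n=1$ for type $B$, and $n=2$ for type $D$, which is why the type-$D$ statement begins at $n\ge 2$), weak Hurwitz stability of $\poly_{n}$ follows for all admissible $n$ by induction, and translating back yields the interlacings $B_{n}^{+}\preceq (B_{n}^{+})^{*}$ and $D_{n}^{+}\preceq (D_{n}^{+})^{*}$, hence the real-rootedness of $B_{n}(x)$ and $D_{n}(x)$. Part (ii) runs in complete parallel to part (i), using the type-$D$ recurrence in place of the type-$B$ one; the only extra care needed is in the small-rank base cases and in confirming that the type-$D$ recurrence operator again satisfies Borcea and Br\"and\'en's symbol criterion.
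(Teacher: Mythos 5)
Your overall philosophy --- translate the self-interlacing into weak Hurwitz stability of an associated real polynomial via the Hermite--Biehler theorem, then establish that stability using Borcea and Br\"and\'en's characterization --- is the same as the paper's, but your execution has a genuine gap at its decisive step. You propose to prove stability of the associated polynomial by induction on $n$, lifting Hyatt's recurrences for $B_{n}^{+}$ (resp.\ $D_{n}^{+}$) to a linear operator $T$ with $\poly_{n+1}=T(\poly_{n})$ and then checking the Borcea--Br\"and\'en symbol of $T$. But you never write down $T$, and you explicitly defer the symbol computation, which you yourself identify as the one genuinely computational point. Since Hyatt's recurrences mix $B_{n}^{+}$, $B_{n}^{-}$ and their derivatives, it is not evident that the induced operator has a weakly Hurwitz stable symbol; until that is exhibited and verified, the argument is a plan rather than a proof. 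A secondary imprecision: in the relevant form of Hermite--Biehler (Theorem \ref{Hermite--Biehler}), weak Hurwitz stability of $P$ is equivalent to the conjunction of $P^{O}\preceq P^{E}$ \emph{and} both parts having only real, non-positive zeros, so stability is strictly stronger than the bare interlacing statement; only the implication from stability to interlacing is actually available ``for free,'' which matters if you intend to run the equivalence in both directions inside an induction.

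The paper avoids the induction entirely. The generating-function identities for $A_{n-1}$ and $B_{n}^{\pm}$ give the closed form $(x+1)^{n}A_{n-1}(x)=B_{n}^{+}(x^{2})+\tfrac{1}{x}B_{n}^{-}(x^{2})$, so for part (i) the associated polynomial is simply $(x+1)^{n}A_{n-1}(x)$, which has only non-positive real zeros and is therefore weakly Hurwitz stable with no operator theory at all; combining with $B_{n}^{-}(x)=x^{n}B_{n}^{+}(1/x)$ finishes (i). For part (ii) the associated polynomial is $(x+1)^{n-1}\bigl((x+1)A_{n-1}(x)-nxA_{n-2}(x)\bigr)$, whose stability is obtained by a single application of Borcea--Br\"and\'en to the explicit operator $(nx+n+k)-(x^{2}-1)\tfrac{d}{dx}$ acting on the already-stable polynomial $xA_{n-2}(x)$, with the symbol computed in closed form. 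If you wish to salvage your inductive route, you must actually exhibit the operator coming from Hyatt's recurrence and verify its symbol; alternatively, note that identifying the associated polynomial explicitly, as the paper does, renders the induction unnecessary.
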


In this paper, we shall show that both Theorem \ref{thm:main} and Conjecture
\ref{conj:hyatt} can be derived from the Hurwitz stability of certain polynomials.


\section{Stability}

In this section, we shall give an overview of some fundamental results on Hurwitz stability,  which serve as basic tools for our proofs of Theorem \ref{thm:main} and Conjecture \ref{conj:hyatt}.

Let $\mathbb{C}[z]$ denote the set of all polynomials in $z$ with complex coefficients. Recall that a polynomial $P(z) \in \mathbb{C}[z]$ is said to be weakly Hurwitz stable (resp. Hurwitz stable) if $P(z)\neq0$ whenever  $\Re\, z>0$  (resp. $\Re\, z\ge 0$), where $\Re\, z$ denotes the real part of $z$. This concept has been extended to multivariate polynomials.
Let $\mathbb{C}[z_1,z_2,\ldots,z_n]$ denote the set of polynomials in $z_1,z_2,\ldots,z_n$. We say that $P(z_1,z_2,\ldots,z_n) \in \mathbb{C}[z_1,z_2,\ldots,z_n]$
is weakly Hurwitz stable  (resp. Hurwitz stable)  if $P(z_1,z_2,\ldots,z_n)\neq0$ for all tuples $(z_1,z_2,\ldots,z_n)\in\mathbb{C}^n$ with $\Re\, z_{i}>0$   (resp. $\Re\, z_i\ge 0$)
for $1\leq i\leq n$.

The first tool to be used is the Hermite--Biehler theorem, a basic result in the Routh--Hurwitz theory  \cite{Rahman2002Analytic}.
Suppose that
$$P(z)=\sum_{k=0}^{n}a_{k}z^{k}.$$
Let
\begin{align}\label{eq-even-odd}
P^{E}(z)=\sum_{k=0}^{\lfloor n/2\rfloor}a_{2k}z^{k}\quad\mbox{ and }\quad P^{O}(z)=\sum_{k=0}^{\lfloor(n-1)/2\rfloor}a_{2k+1}z^{k}.
\end{align}
As shown below, the stability of $P(z)$ is closely related to the interlacing property between $P^{E}(z)$ and $P^{O}(z)$.

\begin{thm}[{\cite[Theorem 4.1]{Braenden2011Iterated}, \cite[pp. 197]{Rahman2002Analytic}}]
Let $P(z)$ be a polynomial with real coefficients, and let $P^{E}(z)$ and $P^{O}(z)$ be defined as in \eqref{eq-even-odd}. Suppose that $P^{E}(z)P^{O}(z)\not\equiv0$. Then $P(z)$
is  weakly Hurwitz stable if and only if $P^{E}(z)$ and $P^{O}(z)$ have
only real and  non-positive
zeros, and $P^{O}(z) \sep P^{E}(z)$.
\label{Hermite--Biehler}
\end{thm}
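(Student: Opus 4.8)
The plan is to pass to the boundary of the right half-plane, namely the imaginary axis, and to recognize the three conditions on $P^{E}$ and $P^{O}$ as the classical phase/interlacing condition underlying Hermite--Biehler. Writing $z=i\omega$ with $\omega\in\reals$ and using that $P^{E},P^{O}$ have real coefficients, I would first record the fundamental identity
$$P(i\omega)=P^{E}(-\omega^{2})+i\,\omega\,P^{O}(-\omega^{2}),$$
so that $\Re\,P(i\omega)=P^{E}(-\omega^{2})$ and $\Im\,P(i\omega)=\omega\,P^{O}(-\omega^{2})$ are exactly the real and imaginary parts of $P$ along the imaginary axis. A second, complementary observation is that $z\mapsto z^{2}$ maps the open right half-plane bijectively onto the slit plane $\mathbb{C}\setminus(-\infty,0]$; hence ``$P^{E}$ and $P^{O}$ have only real, non-positive zeros'' is equivalent to ``$P^{E}(z^{2})$ and $P^{O}(z^{2})$ do not vanish for $\Re\,z>0$.'' These two reformulations turn the statement into a problem about a single planar curve.

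Next I would run an argument-principle computation. Fixing a large $R$ and integrating $P'/P$ over the boundary of the right half-disk of radius $R$ (up the imaginary axis, then back along the semicircle $Re^{i\theta}$), the number of zeros of $P$ with $\Re\,z>0$ equals $\tfrac{1}{2\pi}$ times the total change of $\arg P$ around this contour. As $R\to\infty$ the semicircular arc contributes a change of magnitude $n\pi$, so weak Hurwitz stability (no zero in the open right half-plane) is equivalent to the net change of $\arg P(i\omega)$, as $\omega$ runs from $-\infty$ to $+\infty$, being of magnitude $n\pi$ and achieved \emph{monotonically}, i.e. in one fixed rotational direction. In other words, the curve $\omega\mapsto\bigl(\Re\,P(i\omega),\,\Im\,P(i\omega)\bigr)$ must wind about the origin without ever backtracking.

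I would then convert this monotone-rotation condition into the combinatorial interlacing. A planar curve given by two real functions $u,v$ winds monotonically about the origin precisely when the Wronskian $uv'-u'v$ has a fixed sign, which forces the real zeros of $u$ and $v$ to strictly alternate. Applying this to $u(\omega)=P^{E}(-\omega^{2})$ and $v(\omega)=\omega\,P^{O}(-\omega^{2})$ and undoing the substitution $w=-\omega^{2}$ via the slit-plane observation, the alternation says exactly that all zeros of $P^{E}$ and $P^{O}$ are real and non-positive and that $P^{O}(z)\sep P^{E}(z)$ in the sense of the introduction; the hypothesis $P^{E}P^{O}\not\equiv0$ guarantees that both coordinate functions genuinely appear, so that alternation is the right notion. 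Proving both implications then amounts to checking that any failure of reality, of non-positivity, or of interlacing produces a non-monotone phase, hence by the count above a zero in the open right half-plane, and conversely.

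The main obstacle I anticipate is the boundary bookkeeping forced by the word \emph{weak}: because zeros are permitted on the imaginary axis, the naive winding number is ambiguous exactly where $P(i\omega)=0$, and one must decide how such boundary zeros are apportioned. I would handle this by a perturbation argument, replacing $z$ by $z+\varepsilon$ to push any boundary zeros strictly into the left half-plane and then letting $\varepsilon\downarrow0$, together with a separate check that common factors of $P^{E}(z^{2})$ and $zP^{O}(z^{2})$ (which encode the imaginary-axis zeros) are compatible with the \emph{weak} interlacing $\sep$ rather than the strict $\seps$. A secondary, purely bookkeeping nuisance is that the degrees of $P^{E}$ and $P^{O}$, and hence the precise alternation pattern at the top end, depend on the parity of $n=\deg P$; this is dispatched by treating the two parities separately when matching leading coefficients.
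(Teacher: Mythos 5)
The paper does not prove Theorem \ref{Hermite--Biehler}; it is quoted from Br{\"a}nd{\'e}n and from Rahman--Schmeisser, so there is no internal proof to compare against. Your outline is the classical argument-principle (``monotone phase'') proof of the Hermite--Biehler theorem, which is essentially the route taken in the cited sources, and it is sound: the identity $P(i\omega)=P^{E}(-\omega^{2})+i\,\omega\,P^{O}(-\omega^{2})$, the reduction of stability to a net phase change of magnitude $n\pi$ along the imaginary axis, and the Wronskian (Hermite--Kakeya) translation of monotone winding into alternation of zeros are exactly the right ingredients. Two points would need to be firmed up in a written version. First, alternation of the real zeros of $u(\omega)=P^{E}(-\omega^{2})$ and $v(\omega)=\omega P^{O}(-\omega^{2})$ does not by itself exclude non-real or positive zeros of $P^{E}$ and $P^{O}$; what does is the counting forced by the total phase change $n\pi$, which compels $u$ and $v$ to carry their full complement of real zeros, and only then does inverting $w=-\omega^{2}$ yield ``all zeros real and non-positive.'' Second, the boundary bookkeeping you flag is genuinely where the weak notions must be matched: imaginary-axis zeros of $P$ away from the origin correspond precisely to common non-positive real zeros of $P^{E}$ and $P^{O}$, while a zero of $P$ at the origin corresponds to $P^{E}(0)=0$ alone; your $z\mapsto z+\varepsilon$ perturbation handles this, provided you also note that the weak relations ($\sep$ and weak stability) are closed conditions, so they survive the limit $\varepsilon\downarrow 0$. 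With those details supplied the argument is complete.
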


The second tool to be used is Borcea and   Br{\"a}nd{\'e}n's characterization of linear operators preserving weakly Hurwitz stability, see \cite{Borcea2009Leea}. Let $\mathbb{C}_m[z]$ denote the set of polynomials over $\mathbb{C}$ with degree less than or equal to $m$.

\begin{thm}[{\cite[Theorem 8]{Borcea2009Polya}, \cite[Theorem 3.2]{Borcea2009Leea}}]
Let $m\in \mathbb{N}$ and
$T : \mathbb{C}_m[z] \rightarrow \mathbb{C}[z]$ be a linear
operator.
Then $T$ preserves  weak Hurwitz stability if and only if
\begin{itemize}
\item[(a)] $T$ has range of dimension at most one and is of the form
$T(f) = \alpha(f)P$,
where $\alpha$ is a linear functional on $\mathbb{C}_m[z]$ and
$P$ is a weakly Hurwitz stable polynomial, or
\item[(b)] The polynomial
\begin{equation}\label{stablesymb-d}
T[(z w+1)^m]:=\sum_{k=0}^m \binom{m}{k}
T(z^k)w^{k}
\end{equation}
is weakly Hurwitz stable in two variables $z$, $w$.
\end{itemize}
\label{open-disk}
\end{thm}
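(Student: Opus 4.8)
The plan is to reconstruct the symbol-based argument of Borcea and Br\"and\'en, whose two essential ingredients are the classical Grace--Walsh--Szeg\H{o} (GWS) coincidence theorem and the polarization of a univariate polynomial into a symmetric multiaffine form. Throughout, write $\Omega=\{z\in\mathbb{C}:\Re z>0\}$ for the open right half-plane, so that weak Hurwitz stability is exactly non-vanishing on $\Omega$; the point is that $\Omega$ is \emph{convex}, which is what makes GWS applicable. The first observation is that $(zw+1)^m$ is itself weakly Hurwitz stable in the two variables $(z,w)$: if $\Re z>0$ and $\Re w>0$ then $zw$ has argument in $(-\pi,\pi)$ and so cannot equal $-1$. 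Thus $(zw+1)^m$ plays the role of a universal stable polynomial, and the symbol $G_T(z,w)=T[(zw+1)^m]$ (with $T$ acting in the $z$ variable) is the natural test object. I would split according to the rank of $T$: if $T$ has rank at most one then $T(f)=\alpha(f)P$, and applying $T$ to a stable $f$ with $\alpha(f)\neq0$ shows at once that preservation is equivalent to $P$ being weakly Hurwitz stable, which is alternative (a); so the real content lies in the rank $\geq 2$ case, where the goal is to show preservation is equivalent to (b).

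For the direction (b) $\Rightarrow$ preservation --- the clean half --- I would polarize the symbol in a fresh batch of variables. Set
\[
\mathcal{G}_T(z,w_1,\dots,w_m)\;=\;T\!\left[\prod_{i=1}^m (1+z w_i)\right]\;=\;\sum_{k=0}^m T(z^k)\,e_k(w_1,\dots,w_m),
\]
which is symmetric and multiaffine in $w_1,\dots,w_m$ and whose diagonal $w_1=\dots=w_m=w$ recovers $G_T(z,w)$. Fixing any $z\in\Omega$, hypothesis (b) gives $G_T(z,w)\neq0$ for all $w\in\Omega$, so by GWS (here convexity of $\Omega$ is used) the polarization satisfies $\mathcal{G}_T(z,w_1,\dots,w_m)\neq0$ for all $w_1,\dots,w_m\in\Omega$. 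Now let $p$ be weakly Hurwitz stable of degree $\leq m$; by a density/Hurwitz limiting argument I may assume $p$ is strictly stable of degree exactly $m$, say $p(z)=c\prod_{i=1}^m(z-a_i)$ with $\Re a_i<0$. Choosing $w_i=-1/a_i$, which lie in $\Omega$ since $\Re(-1/a_i)=-\Re a_i/|a_i|^2>0$, one checks that $\prod_i(1+z w_i)$ is a nonzero scalar multiple of $p(z)$, whence $T(p)(z)=\mathrm{const}\cdot\mathcal{G}_T(z,-1/a_1,\dots,-1/a_m)$. All arguments lie in $\Omega$, so the right-hand side is nonzero for every $z\in\Omega$, and $T(p)$ is weakly Hurwitz stable.

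For the converse I would feed the universal family back into $T$. For each fixed $w_0$ in the closed right half-plane the polynomial $(zw_0+1)^m$ is weakly Hurwitz stable in $z$ (its only root $-1/w_0$ lies in the closed left half-plane), so preservation forces $G_T(z,w_0)=T[(zw_0+1)^m]$ to be either weakly Hurwitz stable in $z$ or identically zero. The slice $G_T(\cdot,w_0)$ vanishes identically precisely when $(zw_0+1)^m\in\ker T$, and this occurs only on a proper algebraic subset $Z$ of the $w_0$-plane (otherwise $G_T\equiv0$, i.e. $T\equiv0$). For $w_0\in\Omega\setminus Z$ the slice is genuinely stable, so $G_T(z_0,w_0)\neq0$ whenever $\Re z_0>0$; hence the only possible joint zeros of $G_T$ in $\Omega\times\Omega$ sit over the degenerate locus $Z\cap\Omega$.

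The main obstacle is exactly to rule out these degenerate slices: when the image of $T$ has dimension at least two one must show there is \emph{no} $w_0\in\Omega$ with $(zw_0+1)^m\in\ker T$, for otherwise $G_T$ would fail to be stable even though $T$ preserves stability. This is the genuinely hard, non-formal part of the theorem --- the slice/limiting argument above confines the difficulty but does not dispatch it, and resolving it needs the finer Borcea--Br\"and\'en analysis (passing through the multivariate characterization of stability-preserving operators and an apolarity/duality argument controlling $\ker T$) rather than GWS alone. A secondary but unavoidable technical burden is the boundary bookkeeping intrinsic to \emph{weak} stability: the density reduction to strictly stable $p$, the degree-drop cases where some $a_i\to\infty$ (so some $w_i\to0$), and the convention that the zero polynomial counts as stable must all be handled by Hurwitz-type limiting arguments so that no spurious zeros creep onto the imaginary axis.
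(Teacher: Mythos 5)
This theorem is quoted in the paper from Borcea and Br\"and\'en with no proof supplied, so the only meaningful benchmark is the original argument you are reconstructing. Your sufficiency direction is essentially their proof and is sound: polarize the symbol into the symmetric multiaffine form $T[\prod_i(1+zw_i)]$, apply Grace--Walsh--Sz\H{e}go over the convex circular region $\{\Re z>0\}$, substitute $w_i=-1/a_i$ for a strictly stable $p$ of full degree, and handle boundary roots and degree drops by Hurwitz limits. The rank-one case and the slice argument for the converse are also correct as far as they go.

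The gap you flag --- ruling out $w_0$ with $\Re w_0>0$ and $(zw_0+1)^m\in\ker T$ --- is genuine as written, but you have both overestimated its difficulty and misidentified the tool needed: no apolarity or multivariate classification is required, because alternative (a) is precisely the escape hatch for these degenerate slices. Concretely, suppose $T$ preserves weak Hurwitz stability and $p_0(z)=(zw_0+1)^m\in\ker T$ for some $w_0$ with $\Re w_0>0$. Then $p_0$ has degree exactly $m$ with all roots at $-1/w_0$ in the open left half-plane, so for every $f\in\mathbb{C}_m[z]$ and all sufficiently small $\epsilon>0$ the perturbation $p_0+\epsilon f$ still has degree $m$ and all roots in the open left half-plane (continuity of roots, the leading coefficient staying nonzero), hence is weakly Hurwitz stable. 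Therefore $T(p_0+\epsilon f)=\epsilon T(f)$ is weakly Hurwitz stable or zero, i.e.\ \emph{every} element of the range of $T$ is weakly Hurwitz stable or zero. Such a subspace has dimension at most one: if $T(f)$ and $T(g)$ were linearly independent, then for any $z_1$ with $\Re z_1>0$ one could choose $(\lambda,\mu)\neq(0,0)$ with $\lambda T(f)(z_1)+\mu T(g)(z_1)=0$, producing a nonzero element of the range vanishing in the open right half-plane. Hence $T(f)=\alpha(f)P$ with $P$ weakly stable, which is case (a). Consequently, if $T$ preserves stability and is not of form (a), then every slice $T[(zw_0+1)^m]$ with $\Re w_0>0$ is a nonzero weakly stable polynomial in $z$, your slice argument closes with no residual degenerate locus, and (b) follows. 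Inserting this perturbation lemma makes your outline a complete proof.
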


The polynomial $T[(z w+1)^m]$ is called the algebraic symbol of the linear operator $T$.

With the above theorem, we obtain the following result, which plays an important role in our approach to
Theorem \ref{thm:main} and Conjecture
\ref{conj:hyatt}.

\begin{thm}\label{thm-stab}
For any positive integer $n\ge 2$ and any real number $k\ge -n$, the polynomial
 \begin{align*}
  P_n(x)= (x+1)A_{n-1}(x) + k x A_{n-2}(x)
 \end{align*}
is weakly Hurwitz stable.
\end{thm}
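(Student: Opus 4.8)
The plan is to deduce the weak Hurwitz stability of $P_n(x)$ from Borcea and Br\"and\'en's operator criterion (Theorem~\ref{open-disk}) rather than by a direct factorization, since already for $n=2$ one has $P_2(x)=x^2+(2+k)x+1$, whose non-real zeros $\pm i$ at $k=-2$ show that $P_n$ is genuinely not real-rooted and must be treated as a left-half-plane problem. First I would try to realize $P_n$ as the image $P_n=T(Q)$ of a manifestly weakly Hurwitz stable polynomial $Q$ under a linear operator $T\colon\mathbb{C}_m[z]\to\mathbb{C}[z]$. Natural candidates for $Q$ are polynomials with only real non-positive zeros, for instance $(z+1)^{n}$ or the Eulerian polynomial $A_{n-1}(z)$ itself, which by the classical theorem of Frobenius has only simple negative zeros and is therefore weakly Hurwitz stable, as is $(x+1)A_{n-1}(x)$. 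By Theorem~\ref{open-disk}, once such a $T$ is fixed it then suffices to verify that the algebraic symbol $T[(zw+1)^{m}]=\sum_{k}\binom{m}{k}T(z^{k})w^{k}$ is weakly Hurwitz stable in the two variables $z,w$.

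The two features I would exploit while choosing $T$ are the palindromicity of $P_n$ and the sign constraint on $k$. Since $A_{n-1}(x)$ and $xA_{n-2}(x)$ are both palindromic of degree $n$ in the sense $x^nf(1/x)=f(x)$, so is $P_n$; thus its zeros are symmetric under $\rho\mapsto 1/\rho$, a symmetry compatible with their lying in the closed left half-plane. The delicate point is the negative coefficient $k$: the family $P_n=(x+1)A_{n-1}(x)+k\,xA_{n-2}(x)$ is affine in $k$, and I expect the stability region in $k$ to be exactly the half-line $[-n,\infty)$, strict stability for $k>-n$ degenerating to weak stability (zeros reaching the imaginary axis) at the boundary $k=-n$. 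I would therefore first settle the boundary case, where the Eulerian recurrence $A_{n-1}(x)=(1+(n-1)x)A_{n-2}(x)+x(1-x)A_{n-2}'(x)$ collapses $P_n|_{k=-n}$ to the cleaner form $(1+(n-1)x^2)A_{n-2}(x)+x(1-x^2)A_{n-2}'(x)$ (for instance $(x+1)(x^2+x+1)$ when $n=3$), and then propagate stability to all $k\ge -n$ by adding the nonnegative multiple $(k+n)\,xA_{n-2}(x)$ and arguing that this preserves left-half-plane root location.

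Two standard preservation facts would feed both the symbol computation and the propagation step: multiplication by $(x+c)$ with $c\ge0$ preserves weak Hurwitz stability, and so does differentiation, by Gauss--Lucas, since the closed left half-plane is convex and contains the zeros. A further useful observation for the bivariate symbol is that $zw\notin(-\infty,0]$ whenever $\Re z>0$ and $\Re w>0$, because $\arg z$ and $\arg w$ each lie in $(-\pi/2,\pi/2)$, so their sum lies in $(-\pi,\pi)$ and cannot equal $\pm\pi$; hence any symbol that factors through $zw$ and has only non-positive real zeros is automatically stable, and this is the mechanism I would aim to reproduce for the actual $T$.

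The main obstacle is precisely the construction of $T$ together with the proof that its bivariate symbol is weakly Hurwitz stable. A diagonal, degree-preserving $T$ is ruled out: applied to $Q=(z+1)^n$ it would force the symbol to be $P_n(zw)$ and hence require $P_n$ to have only non-positive real zeros, which it does not; so $T$ must genuinely mix degrees and create the complex left-half-plane zeros, and verifying stability of the resulting two-variable polynomial is where the real work lies. I expect this to be handled either by an induction on $n$ whose inductive step applies a stability preserver modeled on the Eulerian recurrence, or by writing the symbol as a product or limit of evidently stable factors; controlling the single negative parameter $k$ uniformly across $[-n,\infty)$, and in particular at the boundary where stability is only weak, is the part I would expect to demand the most care.
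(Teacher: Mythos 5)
You have correctly located the framework the paper uses (Theorem~\ref{open-disk}: realize $P_n=T(Q)$ for a weakly Hurwitz stable $Q$ and check the bivariate symbol), but the proposal stops exactly where the proof begins. You never write down the operator $T$, never compute its symbol, and never verify the symbol's stability; you explicitly defer all three as ``where the real work lies.'' For the record, the paper's choices are concrete and short: the recurrence $A_{n-1}(x)=n\,(xA_{n-2}(x))-(x-1)(xA_{n-2}(x))'$ gives $P_n=T(xA_{n-2}(x))$ with $T=(nx+n+k)-(x^2-1)\frac{d}{dx}$, the symbol collapses to $n(xy+1)^n\bigl(\frac{x+y}{xy+1}+\frac{k+n}{n}\bigr)$, and its stability follows from the Cayley substitution $x=\frac{z-1}{z+1}$, $y=\frac{w-1}{w+1}$, under which $\frac{x+y}{xy+1}=\frac{zw-1}{zw+1}$ and $\Re x>0\iff|z|>1$, so $|zw|>1$ forces $\Re\frac{zw-1}{zw+1}>0$. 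Note that the symbol does \emph{not} ``factor through $zw$'' as you hoped -- it genuinely mixes $xy$ and $x+y$ -- so the mechanism you sketched (non-positive real zeros composed with $zw$) would not apply; the argument needed is the one above. Your candidate inputs $Q=(z+1)^n$ or $Q=A_{n-1}(z)$ are also not the ones that make the recurrence work; it is $xA_{n-2}(x)$.

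Two of your fallback ideas are unsound as stated. First, the ``propagation in $k$'' step -- prove the boundary case $k=-n$ and then add the nonnegative multiple $(k+n)\,xA_{n-2}(x)$ -- fails because the sum of two weakly Hurwitz stable polynomials need not be stable; one would need a common interlacing or a genuinely linear-operator argument, which is precisely what the symbol computation supplies uniformly in $k\ge -n$ with no case split at the boundary. Second, your expectation that the stability region is exactly $[-n,\infty)$ is contradicted by the paper's own Conjecture~\ref{conj-stable}, which places the true threshold at $-2E_n/E_{n-1}<-n$; the theorem only asserts sufficiency of $k\ge-n$. (Minor points: your example $x^2+(2+k)x+1$ is $P_3$, not $P_2$, since $P_2(x)=(1+k)x+1$; and $P_n$ is palindromic of degree $n-1$, not $n$. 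Neither affects the substance.)
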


\begin{proof}
It is known that the Eulerian polynomials $A_{n}(x)$ satisfy the following recurrence relation:
\begin{align*}
 A_{n}(x)&=\left(n x+1\right)A_{n-1}(x)-x(x-1)A'_{n-1}(x)\\[5pt]
 &= (n+1)(xA_{n-1}(x))-(x-1)(xA_{n-1}(x))',
\end{align*}
with the initial condition $A_0(x)=1$.
Thus, we find that
\begin{align*}
P_n(x)&= (nx+n+k)\left(x A_{n-2}(x)\right) - \left(x^2-1\right) \left(x A_{n-2}(x)\right)'.
\end{align*}
This formula could be restated as
$$P_n(x)=T(x A_{n-2}(x)),$$
where
$$T= (nx+n+k) - (x^2-1) \frac{d}{dx}$$
denotes the operator acting on $\mathbb{C}_n[x]$.
It is easy to see that $T$ is a linear operator. The algebraic symbol of $T$ is given by
\begin{align*}
T [(xy+1)^{n}] & = (x y+1)^{n-1} \left( (k+n) (x y+1)+n (x+y) \right)\\[5pt]
& = n (x y+1)^n \left( \frac{x+y}{x y+1}+\frac{k+n}{n} \right).
\end{align*}

We claim that
$$\frac{x+y}{x y+1}+\frac{k+n}{n}$$
is weakly Hurwitz stable in variables $x,y$ if $k\ge -n$.
To prove this,
let
$$x=\frac{z-1}{z+1},\qquad y=\frac{w-1}{w+1}.$$
Note that $\Re\,x>0$ if and only if $|z|>1$. It is obvious that
$$\frac{x+y}{xy+1}=\frac{zw-1}{zw+1}.$$
If $\Re\, x>0$ and  $\Re\, y>0$, then $|z|>1$ and $|w|>1$, and hence $|zw|>1$.
Therefore, we have $\Re\, \frac{zw-1}{zw+1} >0$ and thus $\Re\, \frac{x+y}{xy+1}>0$.
Moreover, it is clear that $x y+1\neq 0$ whenever $\Re\, x>0$ and  $\Re\, y>0$.
It follows that $T [(xy+1)^{n}]$ is weakly Hurwitz stable in variables $x,y$.

By Theorem \ref{open-disk} , the linear operator $T$ preserves stability.
The weak Hurwitz stability of $P_n(x)$ immediately follows from that of $x A_{n-2}(x)$. This completes the proof.
\end{proof}

As a final tool we shall need the Routh-Hurwitz stability criterion, which was given by Hurwitz \cite{Hurwitz1895Ueber}.
For more historical background on this criterion, see \cite[pp. 393]{Rahman2002Analytic}. Given a polynomial
$$P(z)=\sum_{k= 0}^{n}a_{n-k}z^{k},$$
for any $1\leq k\leq n$, let
\begin{align*}
  \Delta_k(P)= \det \left(
  \begin{array}{ccccc}
   a_1 & a_3 & a_5 & \dots & a_{2k-1}\\
   a_0 & a_2 & a_4 & \dots & a_{2k-2}\\
    0  & a_1 & a_3 & \dots & a_{2k-3}\\
    0  & a_0 & a_2 & \dots & a_{2k-r}\\
  \dots&\dots&\dots& \dots & \dots\\
    0  &  0  &  0  & \dots & a_{k}\\
  \end{array}\right)_{k\times k}.
\end{align*}
These determinants are known as the Hurwitz determinants of $P(z)$.
Hurwitz showed that the stability of $P(z)$ is uniquely determined by the signs of $\Delta_k(P)$.

\begin{thm}[{\cite{Hurwitz1895Ueber}}]\label{Hurwitz criterion}
Suppose that $P(z)=\sum_{k= 0}^{n}a_{n-k}z^{k}$ is a real polynomial with $a_0>0$. Then
$P(z)$ is Hurwitz stable if and only if the corresponding Hurwitz determinants $\Delta_k(P)>0$ for any $1\leq k\leq n$.
\end{thm}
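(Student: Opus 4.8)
The plan is to derive the criterion from the Hermite--Biehler theorem (Theorem~\ref{Hermite--Biehler}) together with a Euclidean-algorithm analysis of the resulting interlacing condition, and then to match the quantities produced by that algorithm with ratios of consecutive Hurwitz determinants.

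First I would convert Hurwitz stability into a statement about interlacing. Writing $P(z)=P^{E}(z^{2})+zP^{O}(z^{2})$ as in \eqref{eq-even-odd}, the strict analogue of Theorem~\ref{Hermite--Biehler} asserts that a real polynomial $P(z)$ with positive leading coefficient is Hurwitz stable if and only if $P^{E}$ and $P^{O}$ have only real, strictly negative, simple zeros and $P^{O}\seps P^{E}$. The key observation is that the coefficient vectors of $P^{E}$ and $P^{O}$ are exactly the first two rows $(a_{0},a_{2},a_{4},\dots)$ and $(a_{1},a_{3},a_{5},\dots)$ of the Hurwitz array attached to $P$. Thus the theorem is reduced to showing that this negativity/interlacing condition on the pair $(P^{E},P^{O})$ is equivalent to $\Delta_{k}(P)>0$ for all $k$.

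Next I would test the interlacing by the classical Euclidean (Sturm--Routh) reduction: two real polynomials strictly interlace and have simple negative zeros precisely when every leading quotient obtained by successively reducing the higher-degree member of the pair modulo the lower-degree one is positive. Carrying out this reduction on $(P^{E},P^{O})$ is exactly the construction of the Routh array, and I would show that the first-column entries of that array are $a_{0},\ \Delta_{1}/\Delta_{0},\ \Delta_{2}/\Delta_{1},\ \dots,\ \Delta_{n}/\Delta_{n-1}$, where $\Delta_{0}=1$ and $\Delta_{1}=a_{1}$. Since $a_{0}>0$ and $\Delta_{0}>0$, and since the successive ratios telescope, positivity of all these entries is equivalent to $\Delta_{k}(P)>0$ for every $1\le k\le n$; combined with the interlacing characterization this would complete the proof.

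The main obstacle is the determinant identity in the last step, namely that one step of the Routh reduction on the coefficient rows corresponds precisely to a Schur-complement (Gaussian-elimination) step on the Hurwitz matrix, so that the leading principal minors transform in the telescoping fashion claimed. The analytic ingredients---Hermite--Biehler and the fact that positivity of the Euclidean quotients characterizes interlacing---are standard; the real content is this bookkeeping, which I would organize as an induction on $n$: perform a single reduction, verify by a direct minor computation that the degree-$(n-1)$ polynomial it produces has Hurwitz determinants agreeing with those of $P$ up to a fixed positive factor, and then invoke the inductive hypothesis.
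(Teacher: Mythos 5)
The paper offers no proof of this theorem: it is imported verbatim from Hurwitz's 1895 paper (with Rahman--Schmeisser cited for historical background) and is used only as a black box to generate the computational evidence behind Conjecture~\ref{conj-stable}. So there is no in-paper argument to compare yours against, and your sketch must be judged on its own. What you describe is the classical route to the Routh--Hurwitz criterion, and it is sound in outline: Hermite--Biehler reduces stability to interlacing of $P^{E}$ and $P^{O}$, the Stieltjes/Routh continued-fraction expansion converts strict interlacing with negative zeros into positivity of the successive quotients, and the identification of the first column of the Routh array with the ratios $\Delta_{k}/\Delta_{k-1}$ closes the loop via telescoping. Three ingredients carry real weight and would need to be established rather than asserted. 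First, Theorem~\ref{Hermite--Biehler} as stated in the paper is the \emph{weak} version (non-vanishing on the open half-plane, non-strict interlacing, non-positive zeros); the strict analogue you invoke is a separate theorem, not a formal consequence of the weak one. Second, the equivalence between strict interlacing with simple negative zeros and positivity of all Euclidean quotients is the main analytic lemma; a careful treatment must rule out premature degree drops in the reduction and handle the parity of $n$, which decides whether $\deg P^{E}$ exceeds $\deg P^{O}$ or vice versa. Third, the Schur-complement computation showing that one Routh step multiplies each Hurwitz minor by a fixed positive factor is correct and does support the induction, but in the forward direction (stability implies positivity) it silently requires $\Delta_{k}\neq 0$ at every stage, which is exactly where the nondegeneracy supplied by strict interlacing must be fed back in. None of these is a fatal gap; they are the standard supporting lemmas of the subject, but they are where the proof actually lives.
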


\section{Interlacing}

The main objective of this section is to prove Theorem \ref{thm:main} and Conjecture
\ref{conj:hyatt}.

Let us first review some formulas on the Eulerian polynomials. For Eulerian polynomials of type $A$ and $B$,
it is known that
\begin{align}
  \frac{A_{n-1}(x)}{(1-x)^{n+1}} & =  \sum_{i\ge 0}(i+1)^n x^i,\label{eq:A}
\end{align}
and
\begin{align}
 \frac{B_n(x)}{(1-x)^{n+1}}  & =  \sum_{i\ge 0}(2i+1)^n x^i,\label{eq:B}
\end{align}
see \cite{Brenti1994q} and references therein.

By \eqref{eq:A} and \eqref{eq:B}, we have
\begin{align*}
   (x+1)^{n+1}A_{n-1}(x)&=(1-x^2)^{n+1} \sum_{i\ge 0}(i+1)^n x^i\\
   &=(1-x^2)^{n+1}\left(2^nx\sum_{i\ge0}(i+1)^nx^{2i}+\sum_{i\ge0}(2i+1)^nx^{2i}\right),
\end{align*}
which leads to the following identity,
\begin{align}
(x+1)^{n+1}A_{n-1}(x) & = 2^nxA_{n-1}(x^2)+B_n(x^2). \label{eq:AB}
\end{align}

For Eulerian polynomials of type $D$, Stembridge \cite[Lemma 9.1]{Stembridge1994Some} discovered that $D_n(x)$ has a close connection with the Eulerian polynomials of type $A$ and $B$:
\begin{align}\label{eq:D}
 D_{n}(x) & = B_{n}(x)-n2^{n-1}xA_{n-2}(x).
\end{align}

For affine Eulerian polynomials of type $B$, Dilks, Petersen,  and Stembridge \cite[Proposition 6.3]{Dilks2009Affine} established  the following identity:
\begin{align}
\widetilde{B}_{n}(x) & = 2 x \left(2^{n}A_{n-1}(x)-nB_{n-1}(x) \right). \label{affineB}
\end{align}

The first main result of this section is as follows.

 \begin{thm}
 We have
 \begin{align}\label{eq:DB}
 (x+1)^{n+1}A_{n-1}(x)- n x(x+1)^{n}A_{n-2}(x)= D_n(x^2)+\frac{1}{2x}\widetilde{B}_n(x^2).
 \end{align}
\end{thm}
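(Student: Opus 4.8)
The plan is to reduce both sides of \eqref{eq:DB} to one and the same expression in the four polynomials $A_{n-1}(x^2)$, $A_{n-2}(x^2)$, $B_n(x^2)$, and $B_{n-1}(x^2)$, using only the identities \eqref{eq:AB}, \eqref{eq:D}, and \eqref{affineB} already recorded above. No new combinatorics is required; the statement should fall out of substitution and matching of terms.

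First I would treat the left-hand side. The term $(x+1)^{n+1}A_{n-1}(x)$ is handled directly by \eqref{eq:AB}, which rewrites it as $2^n x A_{n-1}(x^2) + B_n(x^2)$. For the second term I would apply \eqref{eq:AB} a second time, now with $n$ replaced by $n-1$, to get $(x+1)^n A_{n-2}(x) = 2^{n-1} x A_{n-2}(x^2) + B_{n-1}(x^2)$; multiplying by $nx$ then gives $n 2^{n-1} x^2 A_{n-2}(x^2) + n x B_{n-1}(x^2)$. After subtraction the left-hand side becomes
\[
2^n x A_{n-1}(x^2) + B_n(x^2) - n 2^{n-1} x^2 A_{n-2}(x^2) - n x B_{n-1}(x^2).
\]

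Next I would expand the right-hand side. Replacing $x$ by $x^2$ in \eqref{eq:D} yields $D_n(x^2) = B_n(x^2) - n 2^{n-1} x^2 A_{n-2}(x^2)$, and the same substitution in \eqref{affineB} yields $\widetilde{B}_n(x^2) = 2 x^2 \bigl(2^n A_{n-1}(x^2) - n B_{n-1}(x^2)\bigr)$, so that the prefactor $\tfrac{1}{2x}$ cancels one power of $x$ and gives $\tfrac{1}{2x}\widetilde{B}_n(x^2) = 2^n x A_{n-1}(x^2) - n x B_{n-1}(x^2)$. Adding the two pieces reproduces exactly the displayed expression for the left-hand side, which finishes the proof.

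Since the argument is purely formal, there is no genuine analytic or combinatorial obstacle here. The only point demanding care is the bookkeeping: one must remember to invoke \eqref{eq:AB} at both index $n$ and index $n-1$, and to track the single power of $x$ absorbed by the $\tfrac{1}{2x}$ factor when \eqref{affineB} is evaluated at $x^2$. Getting those two indices and that power of $x$ right is essentially the whole game.
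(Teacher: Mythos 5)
Your proposal is correct and follows essentially the same route as the paper: both arguments reduce the identity to the same combination of $A_{n-1}(x^2)$, $A_{n-2}(x^2)$, $B_n(x^2)$, $B_{n-1}(x^2)$ via \eqref{eq:D}, \eqref{affineB}, and two applications of \eqref{eq:AB} (at indices $n$ and $n-1$). The only cosmetic difference is that the paper expands the right-hand side and then recognizes the left-hand side via \eqref{eq:AB}, whereas you expand both sides to a common middle expression.
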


\begin{proof}
 By \eqref{eq:D} and \eqref{affineB}, we obtain that
 \begin{align*}
  \text{R.H.S.} & =
  \big( B_{n}(x^2)-n 2^{n-1}x^2A_{n-2}(x^2) \big)
  +\frac{1}{x} x^2 \big(2^{n}A_{n-1}(x^2)-nB_{n-1}(x^2) \big)  \\
  & = \big( 2^nxA_{n-1}(x^2)+B_n(x^2) \big)
  - n x  \big( 2^{n-1}xA_{n-2}(x^2)+B_{n-1}(x^2) \big),
 \end{align*}
where the desired identity follows from \eqref{eq:AB}. This completes the proof.
\end{proof}

Now we can give a proof of Theorem \ref{thm:main}.

\begin{proof}[Proof of Theorem \ref{thm:main}]
By Theorem \ref{thm-stab}, the polynomial
\begin{align*}
 (x+1)^{n+1}A_{n-1}(x)- n x(x+1)^{n}A_{n-2}(x)=(x+1)^n \left( (x+1)A_{n-1}(x)- n xA_{n-2}(x) \right)
\end{align*}
is weakly Hurwitz stable. Combining \eqref{eq:DB} and Theorem \ref{Hermite--Biehler}, we complete the proof of Theorem \ref{thm:main}.
\end{proof}

We proceed to prove Hyatt's conjectures on half Eulerian polynomials.
Here we need the combinatorial characterization of the descent statistic and the affine descent statistic, see Brenti \cite{Brenti1994q} and
Dilks, Petersen,  and Stembridge \cite{Dilks2009Affine}.


%

From the equality that \cite[(7.5) ]{Athanasiadissymmetric}
\begin{align}
 \frac{B^{+}_{n}(x)}{(1-x)^{n}}  =  \sum_{i\ge 0}\left( (2i+1)^{n}-(2i)^n\right)  x^i,\label{eq:B+}
\end{align}
as well as \eqref{eq:B} and the fact that $B_{n}(x)  = B_{n}^{+}(x) + B_{n}^{-}(x)$,
we get that
\begin{align}
 \frac{B^{-}_{n}(x)}{(1-x)^{n}}  =  \sum_{i\ge 0}\left( (2i)^{n}-(2i-1)^n\right)  x^i.\label{eq:B-}
\end{align}
Similar with \eqref{eq:AB}, it follows from \eqref{eq:A}, \eqref{eq:B+}, and \eqref{eq:B-} that
 \begin{align}
(x+1)^{n}A_{n-1}(x) & = B_n^{+}(x^2)+\frac{1}{x} B_n^{-}(x^2).\label{eq:B+-}
\end{align}
Note that Athanasiadis and Savvidou \cite[Proposition 7.2]{Athanasiadissymmetric} obtained that  $B_n^{+}(x)$ is the even part of $(x+1)^{n}A_{n-1}(x)$.
As remaked by Athanasiadis and Savvidou \cite[Remark 7.3]{Athanasiadissymmetric}, similar formula can be derived  from \cite[Theorem 4.4]{Adinh2001Descent}, see also Athanasiadis \cite[Proposition 2.2]{Athanasiadis2014Edgewise}.

From the involution on $\Bn$ that changes the sign of the first element in the one-line notation, it follows that
\begin{align*}
 2 D^{+}_{n}(x) = \sum_{\sigma \in \Bn^{+}} x^ {\des_D \sigma} \quad \mbox{and} \quad   2 D^{-}_{n}(x) = \sum_{\sigma \in \Bn^{-}} x^ {\des_D \sigma}.
\end{align*}
By further considering the combinatorial characterization of the descent statistic of type $D$ and the affine descent statistic of type $B$, we obtain that
\begin{align}
 \widetilde{B}_{n}(x) & =2 ( x D_{n}^{+}(x) + D_{n}^{-}(x) ).\label{wB}
\end{align}
Together with the fact that  $D_{n}(x)  = D_{n}^{+}(x) + D_{n}^{-}(x)$,  \eqref{eq:DB} turns out  to be
\begin{align}
 (x+1)^{n}A_{n-1}(x)- n x(x+1)^{n-1}A_{n-2}(x)= D_n^+(x^2)+\frac{1}{x}D_n^-(x^2).\label{eq:D+-}
\end{align}

It is not too hard to prove by the bijection from $\Bn^{+}$ to $\Bn^{+}$ that changes all signs of the elements in one-line notation (see \cite[Lemma 7.1]{Athanasiadissymmetric}), it follows that
\begin{align}
 B_{n}^{-}(x) & = x^{n}B_{n}^{+}(1/x),\label{eq-invb}\\
 D_{n}^{-}(x) & = x^{n}D_{n}^{+}(1/x).\label{eq-invd}
\end{align}

The second main result of this section is as follows, which gives an affirmative answer to Conjecture \ref{conj:hyatt}.

\begin{proof}[Proof of Conjecture \ref{conj:hyatt}]
Let us first prove (i). Since $(x+1)^{n}A_{n-1}(x)$ has only non-positive real zeros,
Theorem \ref{Hermite--Biehler} together with
\eqref{eq:B+-} implies that $B_{n}^{+}(x)$ interlaces $B_{n}^{-}(x)$. By \eqref{eq-invb},
this shows that $B_{n}^{+}(x)$ interlaces $x^{n}B_{n}^{+}(1/x)$. The proof is complete.

In the same manner, we can prove (ii). Note that, by Theorem \ref{thm-stab}, the polynomial
$$(x+1)^{n}A_{n-1}(x)- n x(x+1)^{n-1}A_{n-2}(x)$$
is weakly Hurwitz stable. Thus $D_{n}^{+}(x)$ interlaces $D_{n}^{-}(x)$ by \eqref{eq:D+-}.
By \eqref{eq-invb}, that is to say, $D_{n}^{+}(x)$ interlaces $x^{n}D_{n}^{+}(1/x)$.
This completes the proof of (ii).
\end{proof}

Note that the stability of the polynomial $(x+1)A_{n-1}(x)-nx A_{n-2}(x)$ is critical to our approach.
In Theorem \ref{thm-stab}, we have determined the stability of $(x+1)A_{n-1}(x)+ k x A_{n-2}(x)$ for $k\geq -n$.
It is natural to consider the possible values of $k$ for which the stability of this polynomial still holds.
Let $E_n$ be the $n$-th Euler zigzag number, see \cite[A000111]{SloaneLine}, which is the number of up-down permutations of the set  $[n]$.
Using the Routh--Hurwitz stability criterion (Theorem \ref{Hurwitz criterion}), computer evidence suggests the following conjecture.
\begin{conj}\label{conj-stable}
For any $n\geq 3$, the polynomial $(x+1)A_{n-1}(x)+ k x A_{n-2}(x)$ is Hurwitz stable if and only if $k > -2 E_{n}/E_{n-1}$.
\end{conj}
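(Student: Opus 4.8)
The plan is to first explain where the threshold $-2E_n/E_{n-1}$ comes from, and then to read off Hurwitz stability from the location of the purely imaginary zeros of $P_n(x):=(x+1)A_{n-1}(x)+kxA_{n-2}(x)$. The cornerstone is the evaluation
$$A_m(i)=(1+i)^m E_{m+1},\qquad m\ge 0,$$
which I would prove by comparing the exponential generating function of the $A_m(x)$ with the generating function $\sum_{m\ge0}E_m z^m/m!=\sec z+\tan z$ of the zigzag numbers (equivalently, from the recurrence for the Eulerian numbers). Granting this, $P_n(i)=0$ becomes explicit: since $A_{n-1}(i)=(1+i)^{n-1}E_n$, $A_{n-2}(i)=(1+i)^{n-2}E_{n-1}$, and $(1+i)^2=2i$, the equation $P_n(i)=0$ reduces, after dividing by $(1+i)^{n-2}$, to $2E_n+kE_{n-1}=0$. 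Hence $P_n$ has the conjugate pair $\pm i$ among its zeros \emph{precisely} when $k=-2E_n/E_{n-1}=:k^\ast$, and for no other $k$ is $i$ a zero. (For $n=3$ one checks directly that $P_3(x)=x^3+(5+k)x^2+(5+k)x+1$, whose imaginary pair occurs exactly at $k=-4=-2E_3/E_2$.)

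Next I would set up the stability analysis. The polynomial $P_n$ is monic of degree $n$ with $P_n(0)=A_{n-1}(0)=1$, both independent of $k$; hence no zero escapes to infinity or reaches the origin, and all $n$ zeros vary continuously with $k$. Consequently, as $k$ decreases, Hurwitz stability can be lost only when a conjugate pair $\pm i\omega$ with $\omega\neq0$ crosses the imaginary axis. Writing $P_n(x)=P_n^{E}(x^2)+xP_n^{O}(x^2)$ as in \eqref{eq-even-odd}, a purely imaginary zero $i\omega$ corresponds exactly to a common negative zero $-\omega^2$ of $P_n^{E}$ and $P_n^{O}$; both of these are affine-linear in $k$, so the admissible crossing values of $k$ are the real zeros of $\operatorname{Res}_y\!\big(P_n^{E}(y),P_n^{O}(y)\big)$, a polynomial in $k$. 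The computation above shows that $k^\ast$ is one such crossing, with common zero $y=-1$.

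For the ``if'' direction I would show that $P_n$ is Hurwitz stable for every $k>k^\ast$. By Theorem \ref{thm-stab} it is weakly Hurwitz stable for $k\ge -n$, and via Theorem \ref{Hermite--Biehler} this yields the interlacing $P_n^{O}\sep P_n^{E}$; away from the crossing values the interlacing is strict and $P_n$ is genuinely Hurwitz stable. Since the zeros move continuously and the only exit from the open left half-plane is an imaginary crossing, stability persists throughout $(k^\ast,\infty)$ once one knows that $k^\ast$ is the \emph{largest} real zero of $\operatorname{Res}_y(P_n^{E},P_n^{O})$. For the ``only if'' direction, at $k=k^\ast$ the zeros $\pm i$ lie on the axis, so $P_n$ is not Hurwitz stable; a transversality computation — differentiating $P_n(z(k))=0$ to get $z'(k^\ast)=-\,iA_{n-2}(i)/P_n'(i)$ and checking $\Re\,z'(k^\ast)<0$ (for $n=3$ one finds $z'(k^\ast)=-\tfrac12$) — shows the pair enters the right half-plane as $k$ drops below $k^\ast$. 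Alternatively, one can run the whole argument through Theorem \ref{Hurwitz criterion}: the stability boundary is the vanishing of the penultimate Hurwitz determinant $\Delta_{n-1}(P_n)$, and the evaluation lemma identifies its relevant factor as $2E_n+kE_{n-1}$.

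The main obstacle is the global, uniform-in-$n$ control needed for the ``if'' direction: proving that $y=-1$ is the collision of $P_n^{E}$ and $P_n^{O}$ occurring at the \emph{largest} value of $k$ — equivalently, that no conjugate pair $\pm i\omega$ with $\omega\neq1$ crosses the axis at a value of $k$ exceeding $k^\ast$, and that the lower Hurwitz determinants remain positive there. This amounts to understanding the ratio $A_{n-1}(i\omega)/A_{n-2}(i\omega)$ for all real $\omega$ and all $n$, a transcendental condition for which no closed form in $n$ is apparent; this is presumably why the statement has resisted proof and remains a conjecture, even though Theorem \ref{Hurwitz criterion} reduces it, for each fixed $n$, to a finite determinant check.
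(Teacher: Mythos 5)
First, a point of reference: the paper does not prove this statement — it is offered purely as a conjecture supported by computer experiments with the Routh--Hurwitz criterion — so there is no proof of record to compare yours against. That said, your explanation of the constant is correct and genuinely illuminating. The identity $A_m(i)=(1+i)^m E_{m+1}$ checks out (e.g. $A_3(i)=-10+10i=(1+i)^3E_4$ and $A_4(i)=-64=(1+i)^4E_5$), and it yields
\begin{align*}
P_n(i)\;=\;(1+i)^{n-2}\,i\,\bigl(2E_n+kE_{n-1}\bigr),
\end{align*}
so $\pm i$ are zeros of $P_n$ exactly at $k^{\ast}=-2E_n/E_{n-1}$. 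Your boundary-crossing framework ($P_n$ is monic with $P_n(0)=1$ independent of $k$, so stability can only be lost through a crossing at $\pm i\omega$ with $\omega\neq0$, i.e. a common negative zero of $P_n^{E}$ and $P_n^{O}$) and your transversality computation at $k^{\ast}$ are also sound; for $n=4$ one indeed finds $\Delta_3(P_4)=4(12+k)^2(5+k)$, whose binding factor is $2E_4+kE_3$.

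However, as you yourself concede, this is not a proof, and the gap sits exactly where the whole difficulty lives. For the ``if'' direction you must show that $k^{\ast}$ is the \emph{largest} real value of $k$ at which some conjugate pair $\pm i\omega$ lies on the imaginary axis — equivalently, that $2E_n+kE_{n-1}$ is the factor of $\Delta_{n-1}(P_n)$, or of $\operatorname{Res}_y\bigl(P_n^{E}(y),P_n^{O}(y)\bigr)$, with the largest real zero — and you offer no argument for this beyond small cases; controlling $A_{n-1}(i\omega)/A_{n-2}(i\omega)$ for all real $\omega$ uniformly in $n$ is precisely the open problem. The ``only if'' direction is also incomplete: transversality at $k^{\ast}$ only gives instability on a one-sided neighborhood $(k^{\ast}-\epsilon,k^{\ast})$, and since $k^{\ast}=-2E_n/E_{n-1}\approx-4n/\pi$ while the crude sign test $P_n(1)=2\,n!+k\,(n-1)!<0$ only applies for $k<-2n$, there remains a range of $k$ below $k^{\ast}$ where you would have to track the count of right-half-plane zeros through every subsequent crossing. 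In short, the proposal is a valuable reduction — it explains where $-2E_n/E_{n-1}$ comes from, which the paper does not — but it does not prove the conjecture.
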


As pointed out by a referee, the Euler zigzag numbers also appeared in a conjecture proposed by Zhang \cite[Conjecture 4.1]{ZhangReal}, which states that the polynomial $A_{n-1}(x)+ k x\,A_{n-3}(x)$
has all distinct real zeros if and only if $k < -n(n-1)$ or $k > -a(\floor{n/2})$, where $a(n)=E_{2n+1}/E_{2n-1}$. It is desirable to find some connections between these two conjectures.


\end{document}